\newtheorem{theorem}{Theorem}[section]
\newtheorem{lemma}[theorem]{Lemma}
\newtheorem{proposition}[theorem]{Proposition}
\newtheorem{corollary}[theorem]{Corollary}
\theoremstyle{definition}
\newtheorem{definition}[theorem]{Definition}
\theoremstyle{remark}
\newtheorem{remark}[theorem]{Remark}
\numberwithin{equation}{section}
\title{Biharmonic orbits of isotropy representations of symmetric spaces} 
\author{Shinji Ohno} 
\date{\today} 
\begin{document}
\maketitle
\begin{abstract}
In this paper, we give a necessarly and sufficient condition for orbits of linear isotropy representations of Riemannian symmetric spaces are biharmonic submanifolds in hyperspheres in Euclidean spaces.
In particular, we obtain examples of biharmonic submanifolds in hyperspheres whose co-dimension is greater than one. 
\end{abstract}
\section{Introduction}\label{Introduction}

J. Eells and L. Lemaire(\cite{EL1}) introduced the notion of biharmonic map as a generalization of the notion of harmonic map. 
For a smooth map $\varphi$ from a Riemannian manifold $(M, g)$ into an another Riemannian manifold $(N, h)$,
$\varphi$ is said to be harmonic if it is a critical point of the energy functional defined by 
\begin{align*}
E(\varphi)=\frac{1}{2}\int_{M} |d\varphi|^{2}d\mu_{g}.
\end{align*} 
The Euler-Lagrange equation is given by the vanishing of the tension field $\tau_{\varphi}$. 
Harmonic maps are studied by many mathematicians.
The biharmonic maps, which is a generalization of the harmonic map, 
is defined as a critical point of bienergy functional
\begin{align*}
E_{2}(\varphi)=\frac{1}{2}\int_{M} \| \tau_{\varphi}\|^{2}d\mu_{g}.
\end{align*}
Similar to harmonic maps, 
Biharmonic maps are characterized by 
the Euler-Lagrange equation $\tau_{2, \varphi}=0$ 
where $\tau_{2, \varphi}$ is the bitension field of $\varphi$.
It is known that the equation $\tau_{2, \varphi}=0$ is a fourth order partial differential equation. 
By definition, 
harmonic maps are biharmonic maps. 
On the other hand, 
a biharmonic map is not necessary harmonic.
The B. Y. Chen's conjecture is to ask whether every biharmonic submanifold of the Euclidean space $\mathbb{R}^{n}$ must be harmonic, i.e., minimal(\cite{C}).
It was partially solved positively.
For example, 
Akutagawa and Maeta showed (\cite{AM}) that every 
complete properly immersed biharmonic submanifold in the Euclidean space 
${\mathbb R}^n$ must be minimal.
Furthermore, 
it is known (cf. \cite{NU1}, \cite{NU2}, \cite{NUG})
that every biharmonic map of a complete Riemannian manifold
into an another Riemannian manifold of non-positive sectional curvature
with finite energy and finite bienergy must be harmonic.  

On the contrary, 
for the target Riemannian manifold $(N, h)$ of non-negative sectional curvature, 
there exist examples of biharmonic submanifolds which are not harmonic. 
A biharmonic submanifold is called proper if it is not harmonic. 
T. Ichiyama, J. Inoguchi and  H. Urakawa (\cite{IIU}) classified 
homogeneous hypersurfaces which are proper biharmonic in the hypersphere in Euclidean spaces.
More generally, biharmonic homogeneous hypersurfaces in compact symmetric spaces are studied in 
\cite{OSU} and \cite{IS}.
Furthermore, S. Ohno, T. Sakai and H. Urakawa construct higher co-dimensional biharmonic submanifolds in compact symmetric spaces as orbits of Hermann actions which are a generalization of isotropy actions of compact symmetric spaces(\cite{OSU2}).
However, since the rank of hyperspheres are one, the cohomogeneity of Hermann actions on hyperspheres are one.
Therefore, in orbits of Hermann actions on hyperspheres, there is no proper biharmonic submanifolds whose co-dimension is greater than one. 
A. Blamu\c{s}, S. Montaldo and C. Oniciuc give new examples of proper biharmonic submanifolds in spheres and classification of biharmonic submanifolds which are the direct products of some spheres in the unit sphere in \cite{BMO1} and \cite{BMO2}. 

In this paper, using root systems,
we describe a necessary and sufficient condition for an orbit of the linear isotropy representation of a Riemannian symmetric space to be biharmonic in hypersphere, 
and give examples of proper biharmonic submanifolds in hypersphere whose co-dimension is greater than one.

The organization of this paper is as follows.
In Section~\ref{Preliminaries}, we prepare the foundation for the following sections.
In \ref{Biharmonic isometric immersions}, we describe biharmonic isometric immersions.
In particular, we explain that
for an isometric immersion whose tension field is parallel, the biharmonic property is characterized by a condition of the second fundamental form of the isometric immersion (Theorem~\ref{parallel}).
In \ref{Compact symmetric pair and hyperpolar representation}, 
we examine the linear isotropy representations of Riemannian symmetric spaces.
We state that the second fundamental form of an orbit of the linear isotropy representation of a Riemanniam symmetric space is described by the root system of the Riemannian symmetric space.
Moreover, we show the tension field of an orbit of the linear isotropy representation of a Riemanniam symmetric space is parallel with respect to the normal connection.
In Section~\ref{Main theorem and Examples}, 
we show Theorem~\ref{characterizetion:root} and give new examples of proper biharmonic submanifolds of hyper spheres.

\section{Preliminaries} \label{Preliminaries}

\subsection{Biharmonic isometric immersions}\label{Biharmonic isometric immersions}
In this section, 
we describe biharmonic isometric immersions. 
Let $(M, g)$ and $(N, h)$ be Riemannian manifolds,
and  
$\varphi$ be a smooth map from $M$ into $N$. 
We denote by $\nabla, \nabla^{h}$ the Levi-Civita connections on $TM, TN $ of $(M, g), (N, h)$, 
and by $\overline{\nabla}, \widetilde{\nabla}$ the induced connections on $\varphi^{-1}TN, T^{\ast}M \otimes \varphi^{-1}TN$ respectively.
Let $B_{\varphi}$ denotes the second fundamental form of $\varphi$, i.e.
\begin{align*}
B_{\varphi}(X, Y)=\overline{\nabla}_{X}(d\varphi(Y))-d\varphi(\nabla_{X}Y)
\end{align*}
for 
$X, Y \in \mathfrak{X}(M)$. 
Then we define the tension field $\tau_{\varphi}$ of $\varphi$ by  
\begin{align*}
\tau_{\varphi}=\sum_{i=1}^{m}B_{\varphi}(e_{i}, e_{i})
\end{align*}
where $\{e_{i}\}_{i=1}^{m}$ is a locally defined orthonormal frame field on $M$.
\begin{definition}\label{harm}
A smooth map $\varphi$ is called harmonic if $\tau_{\varphi}=0$.
If a harmonic map $\varphi$ is an isometric embedding, then the image $\varphi(M) \subset N$ is called a harmonic submanifold.
\end{definition}
\begin{remark}\rm
When a smooth map $\varphi$ is an isometric immersion, 
$\tau_{\varphi}$ coincides with the mean curvature vector field of $\varphi$. 
Then,  
a harmonic map is a minimal immersion, 
and 
a harmonic submanifold is a minimal submanifold.
\end{remark}

To define the notion of biharmonic maps, 
we define the Jacobi operator $J$.  
For $V\in \Gamma(\varphi^{-1}TN)$ 
\begin{align*}
J(V):=\overline{\Delta}V-\mathcal{R}(V),
\end{align*}
where 
$\overline{\Delta}V=\overline{\nabla}^{\ast}\overline{\nabla}V=\sum_{i=1}^{m}\{\overline{\nabla}_{e_{i}}\overline{\nabla}_{e_{i}}V-\overline{\nabla}_{\nabla_{e_{i}}e_{i}}V \}$, 
$\mathcal{R}(V)=\sum_{i=1}^{m}R^{h}(V, d\varphi(e_{i}))d\varphi(e_{i})$.
Here $R^{h}$ is the curvature tensor field of $N$. 
Then we set 
\begin{align*}
\tau_{2, \varphi}=J(\tau_{\varphi}).
\end{align*}
The vector field $\tau_{2, \varphi}$ is called a bitention field of $\varphi$. 
\begin{definition}\label{biharm}
A smooth map $\varphi$ is called biharmonic if $\tau_{2, \varphi}=0$.
If a biharmonic map $\varphi$ is an isometric embedding, then the image $\varphi(M) \subset N$ is called by  biharmonic submanifold.
\end{definition}
Then we have the following theorem. 
\begin{theorem}[\cite{OSU}]\label{parallel}
Let $\varphi :M\to N$ be a isometric immersion which satisfies that $\nabla^{\perp}_{X}\tau_{\varphi}=0$ for all $X \in \mathfrak{X}(M)$.
Then $\varphi$ is biharmonic if and only if  
\begin{align}\label{biheqn}
\sum_{i=1}^{m}R^{h}(\tau_{\varphi}, d\varphi(e_{i}))d\varphi(e_{i})
=\sum_{i, j=1}^{m}h(\tau_{\varphi}, B_{\varphi}(e_{i}, e_{j}))B_{\varphi}(e_{i}, e_{j}).
\end{align}  
\end{theorem}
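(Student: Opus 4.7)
The plan is to expand $J(\tau_\varphi) = \overline{\Delta}\tau_\varphi - \mathcal{R}(\tau_\varphi)$ using the Gauss--Weingarten formulas, exploit $\nabla^\perp \tau_\varphi = 0$ to simplify the iterated covariant derivative, and then split the resulting identity into its tangential and normal components along $d\varphi(TM)$.

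First, the Weingarten formula $\overline{\nabla}_X \tau_\varphi = -d\varphi(A_{\tau_\varphi} X) + \nabla^\perp_X \tau_\varphi$ collapses under the hypothesis to the purely tangential section $\overline{\nabla}_X \tau_\varphi = -d\varphi(A_{\tau_\varphi} X)$. Iterating and applying the Gauss formula once more to this tangent-valued section yields
\begin{align*}
\overline{\nabla}_{e_i}\overline{\nabla}_{e_i}\tau_\varphi = -d\varphi(\nabla_{e_i}(A_{\tau_\varphi}e_i)) - B_\varphi(e_i, A_{\tau_\varphi}e_i),
\end{align*}
together with the purely tangential $\overline{\nabla}_{\nabla_{e_i}e_i}\tau_\varphi = -d\varphi(A_{\tau_\varphi}(\nabla_{e_i}e_i))$. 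Subtracting and summing in $i$, $\overline{\Delta}\tau_\varphi$ decomposes into a tangential piece $-d\varphi(\sum_i (\nabla_{e_i}A_{\tau_\varphi})e_i)$ and a normal piece proportional to $\sum_i B_\varphi(e_i, A_{\tau_\varphi}e_i)$. Expanding $A_{\tau_\varphi}e_i = \sum_j h(\tau_\varphi, B_\varphi(e_i, e_j))e_j$ via the shape-operator duality $h(A_\xi X, Y) = h(B_\varphi(X,Y), \xi)$ rewrites the normal piece as $\sum_{i,j} h(\tau_\varphi, B_\varphi(e_i,e_j))B_\varphi(e_i,e_j)$, i.e.\ the right-hand side of \eqref{biheqn}.

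Next, I would match $\overline{\Delta}\tau_\varphi = \mathcal{R}(\tau_\varphi)$ separately in the normal and tangential directions. The normal identification gives \eqref{biheqn}. To show that the tangential identification is automatic under the parallelism hypothesis, I would differentiate the duality $h(A_{\tau_\varphi}X, Y) = h(B_\varphi(X, Y), \tau_\varphi)$ to convert $\sum_i (\nabla_{e_i} A_{\tau_\varphi}) e_i$ into a trace of $\nabla^\perp B_\varphi$. Then the Codazzi--Mainardi equation combined with $\sum_i (\nabla^\perp_Y B_\varphi)(e_i, e_i) = \nabla^\perp_Y \tau_\varphi = 0$ rewrites that trace as a trace of the ambient curvature $R^h$, which by the pair-exchange symmetry $R^h(a,b,c,d) = R^h(c,d,a,b)$ is precisely the tangential projection of $\mathcal{R}(\tau_\varphi)$. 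Hence the tangential part of $J(\tau_\varphi) = 0$ holds as an identity, so biharmonicity reduces to the normal equation \eqref{biheqn}.

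The main obstacle is the Codazzi step: producing a trace of $R^h$ from $\sum_i (\nabla^\perp_{e_i} B_\varphi)(e_i, \cdot)$ and invoking the correct curvature symmetries, all while keeping the Gauss--Weingarten sign conventions consistent. Once this is settled, the rest of the argument is a routine submanifold computation enabled by $\nabla^\perp\tau_\varphi = 0$.
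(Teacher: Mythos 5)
Your strategy --- expand $J(\tau_\varphi)$ via Gauss--Weingarten, split into components normal and tangent to $d\varphi(TM)$, and handle the tangential component with Codazzi --- is the standard argument for this statement (the paper itself gives no proof, citing \cite{OSU}), and your normal-component computation, producing $\sum_{i,j}h(\tau_\varphi,B_\varphi(e_i,e_j))B_\varphi(e_i,e_j)$, is correct. The gap is a sign in the Codazzi step, and it is not cosmetic: it inverts the logic of the tangential half of the equivalence. Tracing the Codazzi equation and using $\sum_i(\nabla^{\perp}_{Y}B_\varphi)(e_i,e_i)=\nabla^{\perp}_{Y}\tau_\varphi=0$ gives, for every $Y\in\mathfrak{X}(M)$,
\begin{align*}
g\Bigl(\sum_{i}(\nabla_{e_i}A_{\tau_\varphi})e_i,\,Y\Bigr)
=\sum_{i}h\bigl((\nabla^{\perp}_{e_i}B_\varphi)(e_i,Y),\tau_\varphi\bigr)
=\sum_{i}h\bigl(R^{h}(d\varphi(e_i),d\varphi(Y))d\varphi(e_i),\tau_\varphi\bigr).
\end{align*}
Pair-exchange symmetry turns the last sum into $\sum_{i}h\bigl(R^{h}(d\varphi(e_i),\tau_\varphi)d\varphi(e_i),d\varphi(Y)\bigr)$, and you still need one antisymmetry flip in the first two arguments to reach $\sum_{i}h\bigl(R^{h}(\tau_\varphi,d\varphi(e_i))d\varphi(e_i),d\varphi(Y)\bigr)$; that flip costs a minus sign. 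Hence $d\varphi\bigl(\sum_i(\nabla_{e_i}A_{\tau_\varphi})e_i\bigr)$ equals \emph{minus} the component of $\mathcal{R}(\tau_\varphi)$ tangent to $M$, and the tangential part of $J(\tau_\varphi)$ is $-2$ times that tangent component --- not identically zero, as you claim.

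This matters because the right-hand side of (\ref{biheqn}) is purely normal, so (\ref{biheqn}) as a vector equation contains two assertions: the normal equation you derived, and the vanishing of the tangent component of $\sum_iR^{h}(\tau_\varphi,d\varphi(e_i))d\varphi(e_i)$. With the correct sign, the tangential part of $J(\tau_\varphi)=0$ is exactly equivalent to that second assertion, and the theorem follows. With your sign, the tangential part of $J(\tau_\varphi)$ would vanish identically, biharmonicity would reduce to the normal equation alone, and the ``only if'' direction of the theorem would fail whenever $\mathcal{R}(\tau_\varphi)$ has a nonzero tangent component. (For the constant-curvature target $S$ used later in the paper that component vanishes automatically, which is perhaps why the slip is easy to overlook.) So keep the structure, but redo the curvature-symmetry bookkeeping: the tangential equation is a genuine condition matching the tangential content of (\ref{biheqn}), not an identity.
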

\begin{remark}\rm
The condition (\ref{biheqn}) is equivalent to the following equation, 
\begin{align}\label{biheqn2}
\sum_{i=1}^{m}R^{h}(\tau_{\varphi}, d\varphi(e_{i}))d\varphi(e_{i})
=\sum_{i=1}^{m}B_{\varphi}(A_{\tau_{\varphi}}e_{i}, e_{i})).
\end{align} 
Here $A_{\tau_{\varphi}}$ is the shape operator of $\varphi$ with respect to $\tau_{\varphi}$. 
It holds
$
g(A_{\tau_{\varphi}}X, Y)=h(B_{\varphi}(X,Y), \tau_{\varphi}).
$ 
\end{remark}
\subsection{Compact symmetric pair and the second fundamental form of R-spaces in spheres} \label{Compact symmetric pair and hyperpolar representation}
In this section, 
we express the second fundamental form of orbits of the linear isotropy representations of Riemannian symmetric spaces in hyperspheres in terms of root systems.
Let $G$ be a compact connected semisimple Lie group and $\sigma$ be an involutive automorphism of $G$. 
We take a subgroup $K$ of $G$ which satisfies $\mathrm{Fix}(\sigma, G)_{0}\subset K \subset \mathrm{Fix}(\sigma, G)$,
where $\mathrm{Fix}(\sigma, G)$ is the subgroup of the fixed point set of $\sigma$ and $\mathrm{Fix}(\sigma, G)_{0}$ is the identity component of $\mathrm{Fix}(\sigma, G)$.
Let $\mathfrak{g}$ and $\mathfrak{k}$ denote the Lie algebras of $G$ and $K$ respectively. 
The involutive automorphism of $\mathfrak{g}$ induced from $\sigma$ will be also denoted by $\sigma$.
Then, by definition of $K$,
we have $\mathfrak{k}=\{X\in \mathfrak{g} \mid \sigma(X)=X\}$.
Take an $\mathrm{Ad}(K)$-invariant inner product $\langle \cdot, \cdot \rangle$ on $\mathfrak{g}$.
Then 
\begin{align*}
\mathfrak{g}=\mathfrak{k}\oplus \mathfrak{m}
\end{align*}
is an orthogonal direct sum decomposition of $\mathfrak{g}$ where $\mathfrak{m}=\{ X\in \mathfrak{g} \mid \sigma(X)=-X\}$.
Let $\pi$ denotes the natural projection from $G$ into the coset manifold $G/K$.
The tangent space $T_{\pi(e)}G/K$ of $G/K$ at the origin $\pi(e)$ is identified
with $\mathfrak{m}$ in a natural way,
where $e$ is the identity element of $G$.
Then the inner product $\langle \cdot, \cdot \rangle$ induces a $G$-invariant Riemannian metric on $G/K$.
We denote the Riemannian metric on $G/K$ by the same symbol $\langle \cdot, \cdot \rangle$.
Then $G/K$ is a compact Riemannian symmetric space with respect to $\langle \cdot, \cdot \rangle$.
The group $G$ acts on $G/K$ isometrically by $L_{y}(xK):=yxK \ (x, y \in G)$ .
Thus the subgroup $K$ acts on $G/K$ isometrically, and the action is called by the isotropy action of $G/K$. 
Since for any $k \in K$, $L_{k}$ fixes $o:=eK \in G/K$, the differential $dL_{k}$ of $L_{k}$ at $o$ gives a linear trans formation on $T_{o}G/K$. 
For each $k, k' \in K$, $L_{k}\circ L_{k'}=L_{kk'}$ holds. 
Thus, $K$ has a representation on $T_{o}G/K$, 
and this representation on $T_{o}G/K$ is called the linear isotropy representation of $G/K$. 
On the other hand, 
the differential $\mathrm{Ad}(x)$ of an inner automorphism $\mathrm{I}_{x}$ at $e$ is an automorphism on $\mathfrak{g}$ for $x \in G$, 
where $\mathrm{I}_{x}(y)=xyx^{-1} \ (y \in G)$.
Then we have 
\begin{align}
\mathrm{Ad}(k)\mathfrak{k}=\mathfrak{k},\ 
\mathrm{Ad}(k)\mathfrak{m}=\mathfrak{m} 
\end{align}
for $k \in K$.
Therefore, 
$K$ has a representation on $\mathfrak{m}$.
Then we have 
\begin{align*}
(d\pi)_{e}(\mathrm{Ad}(k)X)=(dL_{k})_{o}((d\pi)_{e}(X))\quad (k\in K, X\in \mathfrak{m}).
\end{align*}
Hence, 
hereafter we consider the representation $K$ on $\mathfrak{m}$.
Take and fix a maximal abelian subspace $\mathfrak{a}$ of $\mathfrak{m}$.
Then it is known 
\begin{align*}
\mathrm{Ad}(K)\mathfrak{a}=\mathfrak{m}.
\end{align*}
Since 
$\langle \mathrm{Ad}(k)X, \mathrm{Ad}(k)Y \rangle =\langle X, Y \rangle\quad (X, Y \in \mathfrak{m})$ holds for all $k \in K$, 
$\mathrm{Ad}(k)$ preserving the unit sphere $S$ in $\mathfrak{m}$. 
For each 
$H \in S$, 
the orbit $\mathrm{Ad}(K)H$ in $S$ is a submanifold of $S$,  
and $\mathrm{Ad}(K)H$ is called a R-space. 
In particular, if $\mathrm{Ad}(K)H$ in $S$ is a minimal submanifold, then it is called a minimal R-space. 
We would like to examine a necessary and sufficient condition for a R-space $\mathrm{Ad}(K)H$ in $S$ is a biharmonic submanifold. 
In order to apply Theorem~\ref{parallel} to $\mathrm{Ad}(K)H$ in $S$,  
we calculate the second fundamental form of $\mathrm{Ad}(K)H$ in $S$, using the root system of $G/K$. 
We define subspaces of $\mathfrak{g}$ as follows:
\begin{align*}
\mathfrak{k}_{0}=\{X\in \mathfrak{k}\mid [H', X]=0 \quad (H' \in \mathfrak{a})\},
\end{align*}
for $\lambda \in\mathfrak{a}\setminus \{0\}$, 
\begin{align*}
\mathfrak{k}_{\lambda }&=\{X \in \mathfrak{k} \mid [H',[H', X]]=-\langle \lambda , H'\rangle^{2}X \quad (H'\in \mathfrak{a}) \}, \\
\mathfrak{m}_{\lambda }&=\{X \in \mathfrak{m} \mid [H',[H', X]]=-\langle \lambda , H'\rangle^{2}X \quad (H'\in \mathfrak{a}) \}.
\end{align*}
We set 
$\Sigma=\{\lambda \in \mathfrak{a}\setminus \{0\} \mid \mathfrak{k}_{\lambda }\neq \{0\}\}$ and 
$m(\lambda )=\dim \mathfrak{k}_{\lambda }$.
The subset $\Sigma$ in $\mathfrak{a}$ is called the root system of $G/K$.
Since
$\mathfrak{k}_{\lambda }=\mathfrak{k}_{-\lambda }$, 
if $\lambda \in \Sigma$, then $-\lambda \in \Sigma $.
Fix a basis of $\mathfrak{a}$ and define a lexicographic ordering $>$ on $\mathfrak{a}$ with respect to the basis of $\mathfrak{a}$, and set  
$
\Sigma^{+}=\{\lambda \in \Sigma \mid \lambda >0\}.
$ 
Let $\Pi=\{\alpha_{1}, \ldots \alpha_{r}\}$ be a set of simple roots of $\Sigma$ 
where $r= \dim \mathfrak{a}$.
For 
$1\leq i\leq r$, 
we define $H_{\alpha_i}\in \mathfrak{a}$ by 
\begin{align*}
\langle H_{\alpha_i}, \alpha_{j}\rangle=\delta_{i,j}\quad (1\leq j\leq r).
\end{align*}
Here, 
$\delta_{i,j}$ is the Kronecker delta. 
Since 
$\Pi$ is a basis of $\mathfrak{a}$, 
$\{H_{\alpha_1}, \ldots , H_{r}\}$ is also a basis of $\mathfrak{a}$. 
Using $\Pi$ and $\{H_{\alpha_1}, \ldots , H_{\alpha_r}\}$, we set an open subset $\mathcal{C}$ of $\mathfrak{a}$ as follows:
\begin{align*}
\mathcal{C}=\{H \in \mathfrak{a} \mid \langle \alpha , H\rangle >0 \quad (\alpha \in \Pi)\}
=\left\{ \left. \sum_{i=1}^{r} x_{i}H_{\alpha_i} \ \right| \ x_{i}>0 \right\}.
\end{align*}
The closure $\overline{\mathcal{C}}$ of $\mathcal{C}$ is given as  
\begin{align*}
\overline{\mathcal{C}}=\{H \in \mathfrak{a} \mid \langle \alpha , H\rangle \geq 0 \quad (\alpha \in \Pi)\}
=\left\{ \left. \sum_{i=1}^{r} x_{i}H_{\alpha_i} \ \right| \ x_{i}\geq 0 \right\}.
\end{align*}
Then,  
\begin{align}
\mathrm{Ad}(K)\overline{\mathcal{C}}=\mathfrak{m}
\end{align}
holds. 
For each subset $\Delta \subset \Pi$, we set 
\begin{align*}
\mathcal{C}^{\Delta}=\{H \in \mathfrak{a} \mid \langle \alpha, H\rangle >0, \langle \beta, H\rangle =0 \quad (\alpha \in \Pi,\ \beta \in \Pi \setminus \Delta )\}.
\end{align*}
Then we have the cell decomposition of $\overline{\mathcal{C}}$
\begin{align}\label{celldecomp}
\overline{\mathcal{C}}=\bigcup_{\Delta \subset \Pi}\mathcal{C}^{\Delta} \quad (\text{disjoint union}).
\end{align}
 
The set $\overline{\mathcal{C}}$ is the orbit space of the representation of $\mathrm{Ad}(K)$ on $\mathfrak{m}$.
Therefore, it is sufficient to consider $\mathrm{Ad}(K)H$ in $S$ for $H \in \overline{\mathcal{C}} \cap S$. 

Hereafter, we assume $H \in \overline{\mathcal{C}}\cap S$. 
In order to compute the second fundamental form of $\mathrm{Ad}(K)H$ in $S$, 
we use the following lemma. 
\begin{lemma}\label{onb}
For each $\lambda \in \Sigma^{+}$, 
there exist orthonormal bases $\{S_{\lambda, i}\}_{i=1}^{m(\lambda )}$ and $\{T_{\lambda, i}\}_{i=1}^{m(\lambda )}$ of $\mathfrak{k}_{\lambda }$ and $\mathfrak{m}_{\lambda}$ respectively
such that for any $H'\in \mathfrak{a}$, 
\begin{align*}
[H', S_{\lambda, i}]&=\langle \lambda ,  H'\rangle T_{\lambda, i},\quad 
[H', T_{\lambda, i}]=-\langle \lambda ,  H'\rangle S_{\lambda, i},\quad 
[S_{\lambda, i}, T_{\lambda, i}]=\lambda, \\
\mathrm{Ad}(\exp(H'))S_{\lambda , i}&=\cos \langle \lambda ,  H'\rangle S_{\lambda , i}+\sin\langle \lambda ,  H'\rangle T_{\lambda,i}\\
\mathrm{Ad}(\exp(H'))T_{\lambda , i}&=-\sin \langle \lambda ,  H'\rangle S_{\lambda , i}+\cos\langle \lambda ,  H'\rangle T_{\lambda,i}
\end{align*}
holds. 
\end{lemma}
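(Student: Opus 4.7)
The plan is to fix a reference $H_0\in\mathfrak{a}$ with $\langle\lambda,H_0\rangle\neq 0$ and use the single operator $\mathrm{ad}(H_0)$ to manufacture $\{T_{\lambda,i}\}$ from any chosen orthonormal basis $\{S_{\lambda,i}\}$ of $\mathfrak{k}_\lambda$, then to bootstrap from $H_0$ to an arbitrary $H'\in\mathfrak{a}$ by a short linear-algebra trick.

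First I would record three structural facts: (a) $\mathrm{ad}(H_0)$ is skew on $\mathfrak{g}$ with respect to $\langle\cdot,\cdot\rangle$, by $\mathrm{Ad}(K)$-invariance of the inner product; (b) $\mathrm{ad}(H_0)$ exchanges $\mathfrak{k}_\lambda$ and $\mathfrak{m}_\lambda$, from $[\mathfrak{a},\mathfrak{k}]\subset\mathfrak{m}$, $[\mathfrak{a},\mathfrak{m}]\subset\mathfrak{k}$ combined with commutativity of $\mathfrak{a}$; (c) $\mathrm{ad}(H_0)^2=-\langle\lambda,H_0\rangle^2\,\mathrm{id}$ on $\mathfrak{k}_\lambda\oplus\mathfrak{m}_\lambda$ directly from the defining relations. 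Hence $J:=\mathrm{ad}(H_0)/\langle\lambda,H_0\rangle$ is a skew orthogonal map on $\mathfrak{k}_\lambda\oplus\mathfrak{m}_\lambda$ with $J^2=-\mathrm{id}$, swapping the two summands. This forces $\dim\mathfrak{k}_\lambda=\dim\mathfrak{m}_\lambda=m(\lambda)$; I would then choose any orthonormal basis $\{S_{\lambda,i}\}$ of $\mathfrak{k}_\lambda$ and set $T_{\lambda,i}:=J S_{\lambda,i}$, which is automatically orthonormal in $\mathfrak{m}_\lambda$.

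The heart of the argument is upgrading the tautological identity $[H_0,S_{\lambda,i}]=\langle\lambda,H_0\rangle T_{\lambda,i}$ to the same identity with an arbitrary $H'\in\mathfrak{a}$. For this I would introduce
\[
H'':=\langle\lambda,H_0\rangle H'-\langle\lambda,H'\rangle H_0\in\mathfrak{a},
\]
so that $\langle\lambda,H''\rangle=0$. The definition of $\mathfrak{k}_\lambda$ then gives $\mathrm{ad}(H'')^2=0$ on $\mathfrak{k}_\lambda$, and skew-symmetry upgrades this to $\mathrm{ad}(H'')=0$ there, since $\|\mathrm{ad}(H'')X\|^2=-\langle\mathrm{ad}(H'')^2X,X\rangle=0$. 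Unwinding yields $[H',S_{\lambda,i}]=\langle\lambda,H'\rangle T_{\lambda,i}$, and the analogous argument on $\mathfrak{m}_\lambda$ (using $[H_0,T_{\lambda,i}]=-\langle\lambda,H_0\rangle S_{\lambda,i}$, which is $J^2=-\mathrm{id}$) gives $[H',T_{\lambda,i}]=-\langle\lambda,H'\rangle S_{\lambda,i}$.

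With the two bracket formulas in hand, the remaining pieces are routine. To identify $[S_{\lambda,i},T_{\lambda,i}]$ with $\lambda$, note this bracket lies in $\mathfrak{m}$ and apply Jacobi: for any $H\in\mathfrak{a}$, $[H,[S_{\lambda,i},T_{\lambda,i}]]=\langle\lambda,H\rangle[T_{\lambda,i},T_{\lambda,i}]-\langle\lambda,H\rangle[S_{\lambda,i},S_{\lambda,i}]=0$, hence this bracket lies in $\mathfrak{a}$ by maximality; pairing with $H$ and using $\mathrm{ad}$-invariance pins it down via $\langle[S_{\lambda,i},T_{\lambda,i}],H\rangle=\langle T_{\lambda,i},[H,S_{\lambda,i}]\rangle=\langle\lambda,H\rangle$. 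Finally, in the ordered basis $(S_{\lambda,i},T_{\lambda,i})$ the operator $\mathrm{ad}(H')$ has matrix $\left(\begin{smallmatrix}0 & -\langle\lambda,H'\rangle\\ \langle\lambda,H'\rangle & 0\end{smallmatrix}\right)$, and $\mathrm{Ad}(\exp H')=\exp\mathrm{ad}(H')$ is the corresponding planar rotation of angle $\langle\lambda,H'\rangle$. The only step requiring genuine insight is the $H''$-trick, and I expect it to be the main obstacle: without it the bracket relations would only be verified for the single reference element $H_0$.
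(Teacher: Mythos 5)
The paper states Lemma~\ref{onb} without proof, as a standard fact about restricted root space decompositions, so there is no in-paper argument to compare against; your proposal supplies the standard proof, and it is correct. All the key steps check out: $\mathrm{ad}(H_0)$ commutes with every $\mathrm{ad}(H')$, $H'\in\mathfrak{a}$, so it maps $\mathfrak{k}_\lambda$ into $\mathfrak{m}_\lambda$ and back; $J=\mathrm{ad}(H_0)/\langle\lambda,H_0\rangle$ is a skew isometry with $J^2=-\mathrm{id}$, which yields equidimensionality and the orthonormal family $T_{\lambda,i}=JS_{\lambda,i}$; and the $H''$-trick (using $\langle\lambda,H''\rangle=0$, hence $\mathrm{ad}(H'')^2=0$ on $\mathfrak{k}_\lambda\oplus\mathfrak{m}_\lambda$, hence $\mathrm{ad}(H'')=0$ there by skewness) correctly upgrades the bracket relations from the single reference $H_0$ to all of $\mathfrak{a}$ --- this is exactly the step that makes the simultaneous diagonalization work. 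The identification $[S_{\lambda,i},T_{\lambda,i}]=\lambda$ via the Jacobi identity, maximality of $\mathfrak{a}$, and invariance of the inner product is also right, as is the exponentiation to the planar rotation. One point you should state more carefully: you justify the skew-symmetry of $\mathrm{ad}(H_0)$ by ``$\mathrm{Ad}(K)$-invariance of the inner product,'' but $H_0\in\mathfrak{a}\subset\mathfrak{m}$, so $\mathrm{Ad}(K)$-invariance alone does not give this; you need the inner product to be $\mathrm{ad}(\mathfrak{g})$-invariant (e.g.\ a negative multiple of the Killing form), which the paper tacitly assumes even though it only writes ``$\mathrm{Ad}(K)$-invariant.'' With that hypothesis made explicit, your argument is complete.
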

By Lemma~\ref{onb}, we have the following direct sum decompositions:  
\begin{align*}
\mathfrak{k}=\mathfrak{k}_{0}\oplus \sum_{\lambda \in \Sigma^{+}}\mathfrak{k}_{\lambda}=
\mathfrak{k}_{0}\oplus \sum_{\lambda \in \Sigma^{+}}\sum_{i=1}^{m(\lambda)} \mathbb{R}\cdot S_{\lambda , i}, \\
\mathfrak{m}=\mathfrak{a}\oplus \sum_{\lambda \in \Sigma^{+}}\mathfrak{m}_{\lambda}=
\mathfrak{a}\oplus \sum_{\lambda \in \Sigma^{+}}\sum_{i=1}^{m(\lambda)} \mathbb{R}\cdot T_{\lambda , i}.
\end{align*}
The tangent space $T_{H}(\mathrm{Ad}(K)H)$ and the normal space $T^{\perp}_{H}(\mathrm{Ad}(K)H)$ in $S$ of 
$\mathrm{Ad}(K)H$ at the point $H \in \overline{\mathcal{C}} \cap S$ is given as 
\begin{align*}
T_{H}(\mathrm{Ad}(K)H)&=
\left\{ \left. \left. \frac{d}{dt} \mathrm{Ad}(\exp(tX))H \right|_{t=0}\quad  \right| \quad  X \in \mathfrak{k}\right\}
=\{[X, H] \mid X\in \mathfrak{k}\}=[\mathfrak{k}, H]\\
&=\sum_{\lambda \in \Sigma^{+}}\sum_{i=1}^{m(\lambda)} \mathbb{R}\cdot (\langle \lambda , H \rangle T_{\lambda , i})
=\sum_{\lambda \in \Sigma^{+}, \langle \lambda , H\rangle \neq0}\sum_{i=1}^{m(\lambda)} \mathbb{R}\cdot T_{\lambda , i} \\
&=\sum_{\lambda \in \Sigma^{+}, \langle \lambda , H\rangle \neq0}\mathfrak{m}_{\lambda}, \\
T^{\perp}_{H}(\mathrm{Ad}(K)H)&=
\left(
\mathfrak{a}\oplus \sum_{\lambda \in \Sigma^{+}, \langle \lambda , H\rangle =0}\mathfrak{m}_{\lambda}
\right)
\cap T_{H}S.
\end{align*}

For $H \in \mathfrak{a}\cap S$,
we set $\Sigma_{H}=\{\lambda \in \Sigma \mid \langle \lambda, H\rangle =0\}$. 
Let $X^{T}$ denotes the tangent vector in $T_{H}S=\{Y \in \mathfrak{m}\mid \langle Y, H\rangle =0\}$ which is defined as  
\begin{align*}
X^{T}=X-\langle X, H\rangle H 
\end{align*} 
for $X\in \mathfrak{m}$.
The vector $X^{T}$ depends on $H\in \mathfrak{a}\cap S$. 

Then we compute the covariant derivative of the orbit $\mathrm{Ad}(K)H$ in $S$. 
Let $\nabla^{S}$ and $\nabla$ denote the Levi-Civita connections of $S$ and $\mathrm{Ad}(K)H$, respectively. 
For each $\lambda \in \Sigma^{+} \setminus \Sigma_{H},\ 1\leq i\leq m(\lambda)$, we define a vector field $(T_{\lambda, i})^{\ast}$ on $\mathfrak{m}$ by 
 \begin{align*}
(T_{\lambda, i})^{\ast}_{X}&=
\left.\frac{d}{dt} \mathrm{Ad}\left(\exp \left( -\frac{tS_{\lambda, i}}{\langle \lambda , H\rangle}\right) \right)X \right|_{t=0}\\
&=-\frac{[S_{\lambda, i}, X]}{\langle\lambda , H\rangle}
\end{align*}
for $X \in \mathfrak{m}$.
Then $(T_{\lambda, i})^{\ast}_{H}=T_{\lambda, i}$ holds. 
Moreover, for each $X \in \mathrm{Ad}(K)H$,  $(T_{\lambda, i})^{\ast}_{X}$ is a tangent vector of $\mathrm{Ad}(K)H$ at $X$. 
Then we have the following proposition. 
\begin{proposition}\label{covdiff}
For each $\lambda, \mu \in \Sigma^{+}\setminus \Sigma_{H},\ 1\leq i\leq m(\lambda ), 1\leq j\leq m(\mu)$,
we have  
\begin{align*}
\left(\nabla^{S}_{(T_{\lambda, i})^{\ast}}(T_{\mu, j})^{\ast}\right)_{H}
=
-\frac{1}{\langle \mu, H\rangle }
\left(
[S_{\mu, j}, T_{\lambda, i}]
\right)^{T}.
\end{align*}
\end{proposition}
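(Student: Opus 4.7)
The plan is to work with $S$ as a hypersphere in the Euclidean space $\mathfrak{m}$ and exploit the Gauss formula. Since $(T_{\mu,j})^{\ast}$ is defined on all of $\mathfrak{m}$ (not merely on the orbit) as a linear function of $X$, it is natural to differentiate it first with respect to the flat connection $D$ on $\mathfrak{m}$ and then project. Explicitly, letting $D$ denote the standard connection on the Euclidean space $\mathfrak{m}$ and writing $\xi_H = H$ for the outward unit normal of $S$ at $H$, the Gauss formula yields
\begin{align*}
\nabla^{S}_{V}W\bigl|_{H} = \bigl(D_{V}W\bigr)_{H} - \bigl\langle (D_{V}W)_{H}, H\bigr\rangle H = \bigl((D_{V}W)_{H}\bigr)^{T}
\end{align*}
for any vector field $W$ on $\mathfrak{m}$ and any tangent direction $V$ at $H$.

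Next I would compute the ambient derivative. Because $(T_{\mu,j})^{\ast}_{X}=-[S_{\mu,j},X]/\langle\mu,H\rangle$ depends linearly on $X\in\mathfrak{m}$, its directional derivative in a direction $V\in\mathfrak{m}$ at any point is simply
\begin{align*}
D_{V}(T_{\mu,j})^{\ast} = -\frac{[S_{\mu,j},V]}{\langle\mu,H\rangle}.
\end{align*}
Evaluating at the base point $H$ and using $(T_{\lambda,i})^{\ast}_{H}=T_{\lambda,i}$, I obtain
\begin{align*}
\bigl(D_{(T_{\lambda,i})^{\ast}}(T_{\mu,j})^{\ast}\bigr)_{H} = -\frac{[S_{\mu,j},T_{\lambda,i}]}{\langle\mu,H\rangle}.
\end{align*}

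Finally I would apply the tangential projection to $T_{H}S$. By the Gauss formula above, the Levi-Civita connection on $S$ is obtained by subtracting the component along $H$, so
\begin{align*}
\bigl(\nabla^{S}_{(T_{\lambda,i})^{\ast}}(T_{\mu,j})^{\ast}\bigr)_{H}
= -\frac{1}{\langle\mu,H\rangle}\bigl([S_{\mu,j},T_{\lambda,i}]\bigr)^{T},
\end{align*}
which is the asserted identity.

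Overall this is a short and essentially mechanical argument; there is no genuine obstacle. The only point requiring care is conceptual rather than computational: the vector fields $(T_{\mu,j})^{\ast}$ are only guaranteed to be tangent to the orbit (and hence to $S$) along $\mathrm{Ad}(K)H$, but the Levi-Civita connection depends only on values along a curve in $S$, and the integral curves of $(T_{\lambda,i})^{\ast}$ through $H$ lie in $\mathrm{Ad}(K)H\subset S$ by construction, so differentiating $(T_{\mu,j})^{\ast}$ ambiently and then projecting tangentially yields the correct $\nabla^{S}$-derivative.
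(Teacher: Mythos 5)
Your proof is correct and is essentially the paper's argument in different clothing: the paper differentiates $(T_{\mu,j})^{\ast}$ along the integral curve $c(t)=\mathrm{Ad}\left(\exp\left(-tS_{\lambda,i}/\langle\lambda,H\rangle\right)\right)H$ through $H$ and then takes the tangential part, which—because $(T_{\mu,j})^{\ast}$ is linear in $X$—is exactly your flat directional derivative $D_{T_{\lambda,i}}(T_{\mu,j})^{\ast}=-[S_{\mu,j},T_{\lambda,i}]/\langle\mu,H\rangle$ followed by the Gauss projection onto $T_{H}S$. Your closing remark is handled correctly (and one can even note that $\langle[S_{\mu,j},X],X\rangle=0$ shows $(T_{\mu,j})^{\ast}$ is tangent to $S$ at every point of $S$, so the restriction to $S$ is a genuine vector field on $S$).
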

\begin{proof}
For $\lambda, \mu \in \Sigma^{+}\setminus \Sigma_{H},\ 1\leq i\leq m(\lambda ), 1\leq j\leq m(\mu)$,
we set a smooth curve
\begin{align}\label{ct}
c(t)=\mathrm{Ad}\left(\exp \left( -\frac{tS_{\lambda, i}}{\langle \lambda , H\rangle}\right) \right)H
\end{align}
 in $\mathrm{Ad}(K)H$. 
Then 
\begin{align*}
\left(\nabla^{S}_{(T_{\lambda, i})^{\ast}}(T_{\mu, j})^{\ast}\right)_{H}
=\left(
\left.\frac{d}{dt}(T_{\mu, j})^{\ast}_{c(t)}\right|_{t=0}
\right)^{T}
\end{align*}
holds.  
Thus we have 
\begin{align*}
\left.\frac{d}{dt}(T_{\mu, j})^{\ast}_{c(t)}\right|_{t=0}
=&\left.\frac{d}{dt} \frac{-1}{\langle \mu, H\rangle}[S_{\lambda, i}, c(t)]\right|_{t=0}\\
=&-\frac{1}{\langle \mu, H\rangle}\left[ S_{\lambda, i},  -\frac{1}{\langle \lambda, H\rangle}[S_{\lambda, i},H]\right] \\
=&-\frac{1}{\langle \mu, H\rangle}[S_{\mu, j}, T_{\lambda, i}].
\end{align*}
Therefor we obtain
\begin{align*}
\left(\nabla^{S}_{(T_{\lambda, i})^{\ast}}(T_{\mu, j})^{\ast}\right)_{H}
=
-\frac{1}{\langle \mu, H\rangle }
\left(
[S_{\mu, j}, T_{\lambda, i}]
\right)^{T}.
\end{align*} 
\end{proof}
By using Proposition \ref{covdiff}, we can express the tension field $\tau_{H}$ of $\mathrm{Ad}(K)H$ in $S$.
\begin{corollary}[\cite{KO}]
Let $\tau_{H}$ be the tension field of $\mathrm{Ad}(K)H$ in $S$. Then, 
\begin{align}
(\tau_{H})_{H}=-\left( \sum_{\lambda \in \Sigma^{+}\setminus \Sigma_{H}} \frac{m(\lambda)}{\langle \lambda, H\rangle} \lambda \right)^{T}
\end{align}
holds. 
In particular, $(\tau_{H})_{H}\in \mathfrak{a}$ holds. 
\end{corollary}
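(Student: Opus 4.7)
The plan is to apply the definition $\tau_{H}=\sum_{k}B(e_{k},e_{k})$ directly, using the vector fields $(T_{\lambda,i})^{\ast}$ constructed just before Proposition~\ref{covdiff} to play the role of the local frame. By Lemma~\ref{onb} the vectors $\{T_{\lambda,i}\mid \lambda\in\Sigma^{+}\setminus\Sigma_{H},\ 1\le i\le m(\lambda)\}$ are orthonormal, and from the description of $T_{H}(\mathrm{Ad}(K)H)$ given above they form a basis of the tangent space at $H$. Since $(T_{\lambda,i})^{\ast}_{H}=T_{\lambda,i}$, the family $\{(T_{\lambda,i})^{\ast}\}$ is an orthonormal basis at $H$; and because $B$ is tensorial, computing $B$ on these vector fields (even though they are not orthonormal away from $H$) produces the correct value of $B$ at $H$.

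Next I would compute the second fundamental form of each diagonal pair. Applying Proposition~\ref{covdiff} with $\mu=\lambda$ and $j=i$, together with the bracket relation $[S_{\lambda,i},T_{\lambda,i}]=\lambda$ from Lemma~\ref{onb}, yields
\begin{align*}
\bigl(\nabla^{S}_{(T_{\lambda,i})^{\ast}}(T_{\lambda,i})^{\ast}\bigr)_{H}
=-\frac{1}{\langle \lambda,H\rangle}\bigl([S_{\lambda,i},T_{\lambda,i}]\bigr)^{T}
=-\frac{1}{\langle \lambda,H\rangle}\lambda^{T}.
\end{align*}
Here one must check that no further projection onto the normal bundle is needed: since $\lambda\in\mathfrak{a}$, the vector $\lambda^{T}=\lambda-\langle\lambda,H\rangle H$ lies in $\mathfrak{a}\cap T_{H}S$, which sits inside $T^{\perp}_{H}(\mathrm{Ad}(K)H)$ because $\mathfrak{a}$ is orthogonal to $\sum_{\mu\in\Sigma^{+}\setminus\Sigma_{H}}\mathfrak{m}_{\mu}=T_{H}(\mathrm{Ad}(K)H)$. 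Thus the expression above already equals $B_{\mathrm{Ad}(K)H}((T_{\lambda,i})^{\ast},(T_{\lambda,i})^{\ast})_{H}$.

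Finally I would sum over the frame. Since the $(\cdot)^{T}$ operation is $\mathbb{R}$-linear and the summand depends on $i$ only through the total count $m(\lambda)=\dim\mathfrak{k}_{\lambda}$, we obtain
\begin{align*}
(\tau_{H})_{H}
=\sum_{\lambda\in\Sigma^{+}\setminus\Sigma_{H}}\sum_{i=1}^{m(\lambda)}
-\frac{1}{\langle\lambda,H\rangle}\lambda^{T}
=-\left(\sum_{\lambda\in\Sigma^{+}\setminus\Sigma_{H}}\frac{m(\lambda)}{\langle\lambda,H\rangle}\lambda\right)^{T},
\end{align*}
which is the desired formula. The claim $(\tau_{H})_{H}\in\mathfrak{a}$ is immediate from the fact that each $\lambda$ lies in $\mathfrak{a}$ and the $(\cdot)^{T}$ operation preserves $\mathfrak{a}$ (because $H\in\mathfrak{a}$).

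I expect the only real subtlety to be the bookkeeping step that $\lambda^{T}$ is genuinely normal to the orbit in $S$, so that $B=(\nabla^{S})^{\perp}$ contributes the full vector rather than a further projection; this is why the computation collapses so cleanly. Everything else is a direct substitution using Proposition~\ref{covdiff} and Lemma~\ref{onb}.
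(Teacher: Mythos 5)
Your proof is correct and is essentially the paper's own (implicit) argument: the paper derives this corollary directly from Proposition~\ref{covdiff} with $\mu=\lambda$, $j=i$, using $[S_{\lambda,i},T_{\lambda,i}]=\lambda$ and the fact that $\lambda^{T}\in\mathfrak{a}\cap T_{H}S\subset T_{H}^{\perp}(\mathrm{Ad}(K)H)$, exactly as you do. Your explicit checks (tensoriality of $B$, orthonormality of the $T_{\lambda,i}$, and that no further normal projection is needed) are the right points to verify and match the paper's setup.
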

We set, 
\begin{align}
\widetilde{\tau_{H}}=-\sum_{\lambda \in \Sigma^{+}\setminus \Sigma_{H}} \frac{m(\lambda)}{\langle \lambda, H\rangle} \lambda 
\end{align}
then $\widetilde{\tau_{H}}^{T}=(\tau_{H})_{H}$ and  
\begin{align*}
\left\langle \widetilde{\tau_{H}}, H\right\rangle
=-\sum_{\lambda \in \Sigma^{+}\setminus \Sigma_{H}} \frac{m(\lambda)}{\langle \lambda, H\rangle} \langle \lambda, H\rangle 
=-\sum_{\lambda \in \Sigma^{+}\setminus \Sigma_{H}} m(\lambda)
=-\dim (\mathrm{Ad}(K)H)
\end{align*}
holds. 
Therefore, we have 
\begin{align}
\widetilde{\tau_{H}}=(\tau_{H})_{H} -\dim (\mathrm{Ad}(K)H)H.
\end{align}
Thus,  
$(\tau_{H})_{H}=0$ if and only if 
$\widetilde{\tau_{H}}=-\dim (\mathrm{Ad}(K)H)H$. 
In order to apply Theorem~\ref{biharm} to $\mathrm{Ad}(K)H$ in $S$, 
we need the following lemma.
\begin{lemma}\label{parallelmeancurvature}
For any $X\in T_{H}\mathrm{Ad}(K)H$, 
\begin{align}
\left(
\nabla^{\perp}_{X}\tau_{H}
\right)_{H}=0
\end{align}
holds.
\end{lemma}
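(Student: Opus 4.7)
The plan is to exploit the $K$-equivariance of the tension field. Since each $\mathrm{Ad}(k)$ is an isometry of the sphere $S$ preserving the orbit $\mathrm{Ad}(K)H$, the mean curvature vector field must satisfy
\[
(\tau_H)_{\mathrm{Ad}(k)H}=\mathrm{Ad}(k)(\tau_H)_H \qquad (k\in K).
\]
Differentiating this identity along a curve of the form $c(t)=\mathrm{Ad}(\exp(tZ))H$ with $Z\in\mathfrak{k}$ would then yield
\[
\left.\frac{d}{dt}\right|_{t=0}(\tau_H)_{c(t)}=[Z,(\tau_H)_H]
\]
as vectors in the ambient space $\mathfrak{m}$. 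Establishing this equivariance is the first step.

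Next, I would reduce to $X=T_{\lambda,i}$ with $\lambda\in\Sigma^+\setminus\Sigma_H$ and $1\leq i\leq m(\lambda)$, since these vectors span $T_H(\mathrm{Ad}(K)H)$. Choosing $Z=-S_{\lambda,i}/\langle\lambda,H\rangle$, the curve (\ref{ct}) already used in the proof of Proposition~\ref{covdiff} satisfies $c'(0)=T_{\lambda,i}$, and the equivariance formula gives
\[
\left.\frac{d}{dt}\right|_{t=0}(\tau_H)_{c(t)}=-\frac{1}{\langle\lambda,H\rangle}\bigl[S_{\lambda,i},(\tau_H)_H\bigr].
\]
Because the corollary above shows $(\tau_H)_H\in\mathfrak{a}$, Lemma~\ref{onb} (applied with $H'=(\tau_H)_H$) gives $[S_{\lambda,i},(\tau_H)_H]=-\langle\lambda,(\tau_H)_H\rangle T_{\lambda,i}$, a vector tangent to $\mathrm{Ad}(K)H$ at $H$.

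Finally I would convert this ambient derivative into the normal-to-orbit covariant derivative inside $S$. The Levi-Civita connection of $S$ differs from the ambient Euclidean derivative by the radial correction $\langle Y,c'(0)\rangle H$, which vanishes here because $(\tau_H)_H\in\mathfrak{a}$ is orthogonal to $T_{\lambda,i}\in\mathfrak{m}_\lambda$. Hence $\nabla^S_{T_{\lambda,i}}\tau_H$ is a scalar multiple of $T_{\lambda,i}\in T_H(\mathrm{Ad}(K)H)$ and has trivial normal component, so $\nabla^\perp_{T_{\lambda,i}}\tau_H=0$. The main subtlety is conceptual rather than computational: one must carefully distinguish the ambient derivative in $\mathfrak{m}$, the spherical Levi-Civita derivative, and the projection onto the normal bundle of the orbit, and justify the equivariance property of the mean curvature vector; once these are in place the argument collapses to a single bracket evaluation from Lemma~\ref{onb}.
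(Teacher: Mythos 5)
Your proposal is correct and follows essentially the same route as the paper: reduce to $X=T_{\lambda,i}$, use the $\mathrm{Ad}(K)$-equivariance of $\tau_H$ along the curve $c(t)$ from Proposition~\ref{covdiff}, and observe via Lemma~\ref{onb} that $[S_{\lambda,i},(\tau_H)_H]$ is proportional to $T_{\lambda,i}$ and hence has vanishing normal component. Your extra remark about the radial correction term (which vanishes since $(\tau_H)_H\in\mathfrak{a}$ is orthogonal to $T_{\lambda,i}$) is a point the paper passes over silently, but it does not change the argument.
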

\begin{proof}
Since for each $X\in T_{H}\mathrm{Ad}(K)H$, $X$ is expressed by linear combination of $\{T_{\lambda , i} \mid \lambda \in \Sigma \setminus \Sigma_{H}, 1\leq i\leq m(\lambda )\}$, 
it is sufficient to prove that 
\begin{align*}
\left(
\nabla^{\perp}_{T_{\lambda, i}}\tau_{H}
\right)_{H}=0
\end{align*}
for $\lambda \in \Sigma \setminus \Sigma_{H}, 1\leq i\leq m(\lambda )$.
Since $\tau_{H}$ is invariant under an isometry, 
we have 
\begin{align*}
(\tau_{H})_{\mathrm{Ad}(k)H}=\mathrm{Ad}(k)(\tau_{H})_{H}
\end{align*}
for each $k \in K$. 
Let $X^{\perp}$ denotes the normal part of $X\in T_{H}S$ in $T^{\perp}_{H}\mathrm{Ad}(K)H$. 
Then, by using the smooth curve $c(t)$ defined in (\ref{ct}), 
\begin{align*}
&\left(
\nabla^{\perp}_{T_{\lambda, i}}\tau_{H}
\right)_{H}
=\left(
\nabla^{S}_{T_{\lambda, i}}\tau_{H}
\right)_{H}^{\perp}\\
&=
\left(\left. \frac{d}{dt}(\tau_{H})_{c(t)}\right|_{t=0} \right)^{\perp}
=
\left(\left. \frac{d}{dt}\mathrm{Ad}\left( \exp \left( -\frac{tS_{\lambda, i}}{\langle \lambda , H\rangle }\right)\right) (\tau_{H})_{H}\right|_{t=0} \right)^{\perp}\\
&=\left(-\frac{1}{\langle \lambda , H\rangle }[S_{\lambda. i}, (\tau_{H})_{H}] \right)^{\perp}\\
&=\left(\frac{\langle \lambda , (\tau_{H})_{H}\rangle }{\langle \lambda , H\rangle }T_{\lambda, i} \right)^{\perp}=0.
\end{align*}
\end{proof}
\section{Main theorem and Examples} \label{Main theorem and Examples}
By Lemma~\ref{parallelmeancurvature}, we can apply Theorem~\ref{parallel} to the orbit $\mathrm{Ad}(K)H$ in $S$.
Then we have the following theorem. 
\begin{theorem}\label{characterizetion:root}
Let $H \in \mathfrak{a}\cap S$. 
Then, 
$\mathrm{Ad}(K)H$ is biharmonic in $S$ if and only of  
\begin{align}\label{biheqn3}
\dim (\mathrm{Ad}(K)H)(\tau_{H})_{H}=\sum_{\lambda \in \Sigma^{+}\setminus \Sigma_{H}}m(\lambda )\frac{\langle \lambda , (\tau_{H})_{H}\rangle}{\langle \lambda , H\rangle^{2}} (\lambda)^{T}.
\end{align} 
\end{theorem}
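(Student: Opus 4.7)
The plan is to apply Theorem~\ref{parallel} to the inclusion $\varphi\colon \mathrm{Ad}(K)H \hookrightarrow S$. Lemma~\ref{parallelmeancurvature}, together with the $K$-equivariance of $\tau_H$ and the homogeneity of the orbit, guarantees $\nabla^\perp \tau_H = 0$ everywhere, so biharmonicity reduces to verifying equation~(\ref{biheqn}) at the single point $H$. I will use the equivalent shape-operator form~(\ref{biheqn2}), because $A_{(\tau_H)_H}$ turns out to be diagonal in the root-space basis $\{T_{\lambda, i}\}$, which makes the right-hand side fall out cleanly.

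For the left-hand side, the unit sphere $S \subset \mathfrak{m}$ has constant sectional curvature $1$, so
\begin{align*}
R^h(X, Y)Z = \langle Y, Z\rangle X - \langle X, Z\rangle Y.
\end{align*}
Because $(\tau_H)_H \in \mathfrak{a}\cap H^\perp \subset T^\perp_H\mathrm{Ad}(K)H$, it is orthogonal to every tangent vector $T_{\lambda, i}$, so the curvature sum collapses to $\dim(\mathrm{Ad}(K)H)\cdot(\tau_H)_H$, matching the left-hand side of~(\ref{biheqn3}).

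For the right-hand side, I combine Proposition~\ref{covdiff} with the $\mathrm{Ad}$-invariance of $\langle\cdot,\cdot\rangle$ and the commutation rules of Lemma~\ref{onb} to show
\begin{align*}
\langle B_\varphi(T_{\lambda, i}, T_{\mu, j}), (\tau_H)_H\rangle
= -\frac{\langle [S_{\mu, j}, T_{\lambda, i}], (\tau_H)_H\rangle}{\langle \mu, H\rangle}
= -\frac{\langle \lambda, (\tau_H)_H\rangle}{\langle \lambda, H\rangle}\,\delta_{\lambda\mu}\delta_{ij}.
\end{align*}
The last equality uses the identity $[(\tau_H)_H, S_{\mu, j}] = \langle \mu, (\tau_H)_H\rangle T_{\mu, j}$ from Lemma~\ref{onb} together with the orthogonality $\mathfrak{m}_\lambda \perp \mathfrak{m}_\mu$ for distinct $\lambda,\mu \in \Sigma^+$, which follows from the self-adjointness of $\mathrm{ad}(H')^2$ on $\mathfrak{m}$. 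Hence $A_{(\tau_H)_H} T_{\lambda, i} = -\tfrac{\langle \lambda, (\tau_H)_H\rangle}{\langle \lambda, H\rangle} T_{\lambda, i}$, and the special case $\lambda = \mu$, $i=j$ of Proposition~\ref{covdiff} combined with $[S_{\lambda, i}, T_{\lambda, i}] = \lambda$ gives $B_\varphi(T_{\lambda, i}, T_{\lambda, i}) = -\tfrac{1}{\langle \lambda, H\rangle}(\lambda)^T$. Summing over the orthonormal basis then yields
\begin{align*}
\sum_i B_\varphi(A_{(\tau_H)_H} e_i, e_i)
= \sum_{\lambda \in \Sigma^+ \setminus \Sigma_H} m(\lambda)\,\frac{\langle \lambda, (\tau_H)_H\rangle}{\langle \lambda, H\rangle^2}\,(\lambda)^T,
\end{align*}
which is exactly the right-hand side of~(\ref{biheqn3}).

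The main technical obstacle is establishing the diagonality of $A_{(\tau_H)_H}$ in the $\{T_{\lambda, i}\}$ basis with the claimed eigenvalues; this is really just bookkeeping with the structure constants recorded in Lemma~\ref{onb} and the root-space orthogonality, and once it is in place, the theorem follows by matching the two sides of~(\ref{biheqn2}) with the two sides of~(\ref{biheqn3}).
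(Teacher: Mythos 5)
Your proposal is correct and follows essentially the same route as the paper's proof: reduce to equation (\ref{biheqn2}) via Lemma \ref{parallelmeancurvature}, evaluate the curvature term using the constant curvature of $S$ together with the fact that $(\tau_{H})_{H}\in\mathfrak{a}$ is normal to the orbit, and diagonalize $A_{(\tau_{H})_{H}}$ in the basis $\{T_{\lambda,i}\}$ using Proposition \ref{covdiff} and Lemma \ref{onb}. The only difference is that your intermediate signs, namely $A_{(\tau_{H})_{H}}T_{\lambda,i}=-\tfrac{\langle\lambda,(\tau_{H})_{H}\rangle}{\langle\lambda,H\rangle}T_{\lambda,i}$ and $B(T_{\lambda,i},T_{\lambda,i})=-\tfrac{1}{\langle\lambda,H\rangle}(\lambda)^{T}$, are the consistent ones (they reproduce the Corollary's formula for $(\tau_{H})_{H}$ upon tracing), whereas the paper's displayed computation carries two cancelling sign slips in the corresponding lines and arrives at the same product $\tfrac{\langle\lambda,(\tau_{H})_{H}\rangle}{\langle\lambda,H\rangle^{2}}(\lambda)^{T}$.
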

\begin{proof}
We compute both sides of the equation (\ref{biheqn2}).

For each $\lambda \in \Sigma^{+}\setminus \Sigma_{H},\ 1\leq i\leq m(\lambda)$, 
we have 
\begin{align*}
R((\tau_{H})_{H}, T_{\lambda , i})T_{\lambda, i}=\frac{1}{\langle H, H\rangle }(\tau_{H})_{H},
\end{align*}
where $R$ is the curvature tensor of $S$. 
Since $\langle H, H\rangle=1$, 
\begin{align*}
&\sum_{\lambda \in \Sigma^{+}\setminus \Sigma_{H}} \sum_{i=1}^{m(\lambda)} R((\tau_{H})_{H}, T_{\lambda , i})T_{\lambda, i}
=\sum_{\lambda, \in \Sigma^{+}\setminus \Sigma_{H}}m(\lambda)(\tau_{H})_{H}\\
&=\dim (\mathrm{Ad}(K)H)(\tau_{H})_{H}
\end{align*}
holds.
Let $B(\cdot , \cdot )$ denotes the second fundamental form of $\mathrm{Ad}(K)H$ in $S$.
For $\lambda,  \mu \in \Sigma^{+}\setminus \Sigma_{H},\ 1\leq i\leq m(\lambda), 1\leq j\leq m(\mu)$,
\begin{align*}
&\langle A_{(\tau_{H})_{H}}T_{\lambda ,i}, T_{\mu, j}\rangle 
= \left\langle (\tau_{H})_{H}, B(T_{\lambda ,i},T_{\mu, j}) \right\rangle 
= \left\langle (\tau_{H})_{H}, \left( \nabla^{S}_{(T_{\lambda ,i})^{\ast}}(T_{\mu, j})^{\ast}\right)_{H}^{T} \right\rangle \\
&=\left\langle (\tau_{H})_{H}, -\frac{1}{\langle \mu , H\rangle }[S_{\mu , j}, T_{\lambda, i}] \right\rangle
=-\frac{1}{\langle \mu , H\rangle } \left\langle (\tau_{H})_{H}, [S_{\mu , j}, T_{\lambda, i}] \right\rangle \\
&=-\frac{1}{\langle \mu , H\rangle } \left\langle -[S_{\mu, j}, (\tau_{H})_{H}], T_{\lambda, i} \right\rangle
=\frac{1}{\langle \mu , H\rangle } \left\langle \langle \mu , (\tau_{H})_{H} \rangle T_{\mu, j}, T_{\lambda, i} \right\rangle\\
&=\frac{\langle \mu , (\tau_{H})_{H} \rangle}{\langle \mu , H\rangle } \delta_{\mu, \lambda }\delta_{j, i}.
\end{align*}
Hence we obtain
\begin{align*}
A_{(\tau_{H})_{H}}T_{\lambda , i}=\frac{\langle \lambda , (\tau_{H})_{H} \rangle}{\langle \lambda , H\rangle }T_{\lambda, i}\quad 
(\lambda \in \Sigma^{+}\setminus \Sigma_{H},\ 1\leq i\leq m(\lambda)).
\end{align*}
Thus, 
\begin{align*}
&B(A_{(\tau_{H})_{H}}T_{\lambda ,i}, T_{\lambda ,i})
=\frac{\langle \lambda , (\tau_{H})_{H} \rangle}{\langle \lambda , H\rangle }B(T_{\lambda ,i}, T_{\lambda ,i})\\
&=\frac{\langle \lambda , (\tau_{H})_{H} \rangle}{\langle \lambda , H\rangle }\frac{[S_{\lambda, i}, T_{\lambda, i}]^{T}}{\langle \lambda , H\rangle}
=\frac{\langle \lambda , (\tau_{H})_{H} \rangle}{\langle \lambda , H\rangle^{2} }\lambda^{T}.
\end{align*}
Therefore, 
we have the consequence.
\end{proof}

The equation~(\ref{biheqn3}) is equivalent to , 
\begin{align*}
0&=\dim (\mathrm{Ad}(K)H)(\tau_{H})_{H}
-\sum_{\lambda \in \Sigma^{+}\setminus \Sigma_{H}}m(\lambda )\frac{\langle \lambda , (\tau_{H})_{H}\rangle}{\langle \lambda , H\rangle^{2}} (\lambda)^{T}\\
&=\dim (\mathrm{Ad}(K)H)(\tau_{H})_{H}
-\sum_{\lambda \in \Sigma^{+}\setminus \Sigma_{H}}m(\lambda )\frac{\langle \lambda , \widetilde{\tau_{H}}- \langle H, \widetilde{\tau_{H}} \rangle H\rangle}{\langle \lambda , H\rangle^{2}} (\lambda)^{T}\\
&=\dim (\mathrm{Ad}(K)H)(\tau_{H})_{H}
-\sum_{\lambda \in \Sigma^{+}\setminus \Sigma_{H}}m(\lambda )
\left( 
\frac{\langle \lambda , \widetilde{\tau_{H}}\rangle}{\langle \lambda , H\rangle^{2}}
-\frac{\langle H, \widetilde{\tau_{H}} \rangle }{\langle \lambda , H\rangle}
\right)(\lambda)^{T}\\
&=\dim (\mathrm{Ad}(K)H)(\tau_{H})_{H}
-\sum_{\lambda \in \Sigma^{+}\setminus \Sigma_{H}}
\left( m(\lambda ) 
\frac{\langle \lambda , \widetilde{\tau_{H}}\rangle}{\langle \lambda , H\rangle^{2}}
(\lambda)^{T}
\right)
+\dim (\mathrm{Ad}(K)H)(\tau_{H})_{H}\\
&=2\dim (\mathrm{Ad}(K)H)(\tau_{H})_{H}
-\sum_{\lambda \in \Sigma^{+}\setminus \Sigma_{H}}
\left( m(\lambda ) 
\frac{\langle \lambda , \widetilde{\tau_{H}}\rangle}{\langle \lambda , H\rangle^{2}}
(\lambda)^{T}
\right).
\end{align*}
Thus, $\mathrm{Ad}(K)H$ in $S$ is biharmonic if and only if 
\begin{align}
(\tau_{2,H})_{H}:=2\dim (\mathrm{Ad}(K)H)(\tau_{H})_{H}
-\sum_{\lambda \in \Sigma^{+}\setminus \Sigma_{H}}
\left( m(\lambda ) 
\frac{\langle \lambda , \widetilde{\tau_{H}}\rangle}{\langle \lambda , H\rangle^{2}}
(\lambda)^{T}
\right)=0
\end{align}
Moreover, 
$\mathrm{Ad}(K)H$ in $S$ is biharmonic if and only if
there exists some constant $c\in \mathbb{R}$, 
$\widetilde{\tau_{2,H}}=cH$ holds. 
\begin{remark}\rm
The vector $(\tau_{2,H})_{H}$ is not necessarily the bitension field of $\mathrm{Ad}(K)H$ in $S$, 
but the condition $(\tau_{2,H})_{H}=0$ is a necessary and sufficient condition for $\mathrm{Ad}(K)H$ to be biharmonic in $S$. Thus, in this paper we use this symbol $(\tau_{2,H})_{H}$.
\end{remark}

Biharmonic orbits can be given by solving Equation~(\ref{biheqn3}) for $H$. 
However, it is difficult to solve this equation in general.
In \cite{HTST}, by using a convex function on $\mathcal{C}^{\Delta}\cap S$ which satisfy $(\mathrm{grad}F)_{H}=(\tau_{H})_{H}$, they show that there exists a  unique $H \in \mathcal{C}^{\Delta}\cap S$ 
such that $(\tau_{H})_{H}=0$ as a critical point of the function.
Even if such a function $f$ on $\mathcal{C}^{\Delta}\cap S$ exists for $(\tau_{2,H})_{H}$, 
it is difficult to decide whether a critical point of $f$ gives a proper biharmonic submanifold or a harmonic submanifold.
Therefore we add some assumptions for $\Sigma $ and $ H $ and discuss the equation $(\tau_{2, H})_{H}=0$.

Hereafter, we assume the root system $\Sigma$ is reducible.
This assumption means that the representation of $K$ on $\mathfrak{m}$ is reducible.
Thus, the orbit $\mathrm{Ad}(K)H$ is a direct product of some R-spaces.
Then the root system $\Sigma$ of $\mathfrak{a}$ is decomposed as $\Sigma = \Sigma_{1}\oplus \Sigma_{2}$,
where $\Sigma_{1}$ and $\Sigma_{2}$ are root system of $\mathrm{Span} (\Sigma_{1})$ and $\mathrm{Span} (\Sigma_{2})$ which satisfy $\mathfrak{a}= \mathrm{Span} (\Sigma_{1}) \oplus \mathrm{Span} (\Sigma_{2})$.
For $\lambda \in \Sigma_{1},\ \mu \in \Sigma_{2}$, $\langle \lambda , \mu \rangle =0$ and $\Sigma=\Sigma_{1}\cup \Sigma_{2}$ hold. 
We set $\Delta_{i}=\Delta \cap \Sigma_{i}$, $\Sigma_{i}^{+}=\Sigma^{+}\cap \Sigma_{i}$ and  
take $H_{i}\in \overline{\mathcal{C}^{\Delta_{i}}}\cap S$ for $i=1, 2$. 
Then we have 
\begin{align*}
\dim \mathrm{Ad}(K)H_{i}=\sum_{\lambda \in \Sigma_{i}^{+}\setminus \Sigma_{H}}m(\lambda).
\end{align*}
For $\theta \in (0, \pi/2)$, we set 
$H=\cos \theta H_{1}+ \sin \theta H_{2}.$ 
Then the tension field of the orbit $\mathrm{Ad}(K)H$ in $S$ is given as 
\begin{align*}
&\widetilde{\tau_{H}}=
-\sum_{\lambda \in \Sigma^{+}\setminus \Sigma_{H}} m(\lambda) \frac{\lambda }{\langle \lambda , H\rangle }\\
=&
-\left( \sum_{\lambda \in \Sigma_{1}^{+}\setminus \Sigma_{H}} m(\lambda) \frac{\lambda }{\langle \lambda , H\rangle }
+\sum_{\mu \in \Sigma_{2}^{+}\setminus \Sigma_{H}} m(\mu) \frac{\mu }{\langle \mu , H\rangle }
\right) \\
=&
-\left( \frac{1}{\cos \theta} \sum_{\lambda \in \Sigma_{1}^{+}\setminus \Sigma_{H}} m(\lambda) \frac{\lambda }{\langle \lambda , H_{1}\rangle }
+\frac{1}{\sin \theta}\sum_{\mu \in \Sigma_{2}^{+}\setminus \Sigma_{H}} m(\mu) \frac{\mu }{\langle \mu , H_{2}\rangle }
\right) \\
=&
\frac{1}{\cos \theta}\widetilde{\tau_{H_{1}}}+\frac{1}{\sin \theta}\widetilde{\tau_{H_{2}}}.
\end{align*}
Further, we suppose that 
\begin{align*}
\widetilde{\tau_{H_{i}}}=-n_{i}H_{i}
\end{align*}
for $i=1, 2$,
where $n_{i}=\dim \mathrm{Ad}(K)H_{i}$. 
This means that the R-space $\mathrm{Ad}(K)H_{i}\subset S$ is minimal in $S$. 
For each nonempty $\Delta_{i}'\subset \Delta_{i}$, there exists a unique 
vector $H_{i}\in \overline{\mathcal{C}^{\Delta_{i}}}\cap S$ which gives 
a minimal R-space (\cite{HTST}).
Since 
\begin{align*}
\widetilde{\tau_{H}}=
-\left(
\frac{n_{1}}{\cos \theta}H_{1}+\frac{n_{2}}{\sin \theta}H_{2}
\right),
\end{align*}
$(\tau_{H})_{H}=0$ if and only if 
\begin{align*}
-\left(
\frac{n_{1}}{\cos \theta}H_{1}+\frac{n_{2}}{\sin \theta}H_{2}
\right)
=-(n_{1}+n_{2})(\cos \theta H_{1}+\sin \theta H_{2}).
\end{align*}
Thus we have 
\begin{align*}
0&=
\left\{ 
\frac{n_{1}}{\cos \theta}-(n_{1}+n_{2})\cos \theta 
\right\} H_{1}
+
\left\{ 
\frac{n_{2}}{\sin \theta}-(n_{1}+n_{2})\sin \theta 
\right\} H_{2}\\
&=
\frac{1}{\cos \theta}
\left\{ 
n_{1}(\sin \theta)^{2}-n_{2}(\cos \theta)^{2} 
\right\} H_{1}
+
\frac{1}{\sin \theta}\left\{ 
n_{2}(\cos \theta)^{2}-n_{1}(\sin \theta)^{2} 
\right\} H_{2}.
\end{align*}
The solution of the above equation is 
\begin{align*}
(\cos \theta)^{2}=\frac{n_{1}}{n_{1}+n_{2}}.
\end{align*} 
Then 
\begin{align*}
(\sin \theta)^{2}=\frac{n_{2}}{n_{1}+n_{2}}
\end{align*}
holds. 

A necessary and sufficient condition for an orbit $\mathrm{Ad}(K)H \subset S$ to be biharmonic 
is there exists $c\in \mathbb{R}$, such that 
$\widetilde{\tau_{2, H}}=cH$. 
To examine the condition $\widetilde{\tau_{2, H}}=cH$, we compute $\widetilde{\tau_{2, H}}$.
Then we have 
\begin{align*}
&\widetilde{\tau_{2,H}}=2\dim (\mathrm{Ad}(K)H)\widetilde{\tau_{H}}
-\sum_{\lambda \in \Sigma^{+}\setminus \Sigma_{H}}
\left( m(\lambda ) 
\frac{\langle \lambda , \widetilde{\tau_{H}}\rangle}{\langle \lambda , H\rangle^{2}}
\lambda
\right)\\
=&2(n_{1}+n_{2})\widetilde{\tau_{H}}
-\sum_{\lambda \in \Sigma_{1}^{+}\setminus \Sigma_{H}}
\left( m(\lambda ) 
\frac{-n_{1}}{(\cos \theta)^{3}} \frac{\langle \lambda , H_{1}\rangle}{\langle \lambda , H_{1}\rangle^{2}}
\lambda
\right)
-\sum_{\mu \in \Sigma_{2}^{+}\setminus \Sigma_{H}}
\left( m(\mu ) 
\frac{-n_{2}}{(\sin \theta)^{3}} \frac{\langle \mu , H_{2}\rangle}{\langle \mu , H_{2}\rangle^{2}}
\mu
\right)\\
=&2(n_{1}+n_{2})\widetilde{\tau_{H}}
-\frac{n_{1}}{(\cos \theta)^{3}}\widetilde{\tau_{H_{1}}}
-\frac{n_{2}}{(\sin \theta)^{3}}\widetilde{\tau_{H_{2}}}\\
=&
-2(n_{1}+n_{2})\left( \frac{n_{1}}{\cos \theta }H_{1}+\frac{n_{2}}{\sin \theta }H_{2} \right)
+\frac{n_{1}^{2}}{(\cos \theta)^{3}}H_{1}
+\frac{n_{2}^{2}}{(\sin \theta)^{3}}H_{2}\\
=&
\frac{1}{\cos \theta}\left\{ -2(n_{1}+n_{2})n_{1}+ \frac{n_{1}^{2}}{(\cos \theta)^{2}}\right\}H_{1}
+
\frac{1}{\sin \theta}\left\{ -2(n_{1}+n_{2})n_{2}+ \frac{n_{2}^{2}}{(\sin \theta)^{2}}\right\}H_{2}.
\end{align*}
Since 
$H=\cos \theta H_{1}+\sin \theta H_{2}$, 
a necessary and sufficient condition for an orbit $\mathrm{Ad}(K)H \subset S$ to be biharmonic 
is there exists $c\in \mathbb{R}$, such that 
\begin{align}
\begin{cases}\displaystyle
\frac{1}{\cos \theta}\left\{ -2(n_{1}+n_{2})n_{1}+ \frac{n_{1}^{2}}{(\cos \theta)^{2}}\right\}=c\cos \theta \\
\displaystyle
\frac{1}{\sin \theta}\left\{ -2(n_{1}+n_{2})n_{2}+ \frac{n_{2}^{2}}{(\sin \theta)^{2}}\right\}=c\sin \theta.
\end{cases}
\end{align}
The above equation holds if and only if 
\begin{align}\label{biheqn4}
\frac{1}{(\cos \theta)^{2}}\left\{ -2(n_{1}+n_{2})n_{1}+ \frac{n_{1}^{2}}{(\cos \theta)^{2}}\right\}
-\frac{1}{(\sin \theta)^{2}}\left\{ -2(n_{1}+n_{2})n_{2}+ \frac{n_{2}^{2}}{(\sin \theta)^{2}}\right\}=0
\end{align}
holds. 
Then, we can calculate the left side of Equation~(\ref{biheqn4}). 
\begin{align*}	
&\frac{1}{(\cos \theta)^{2}}\left\{ -2(n_{1}+n_{2})n_{1}+ \frac{n_{1}^{2}}{(\cos \theta)^{2}}\right\}
-\frac{1}{(\sin \theta)^{2}}\left\{ -2(n_{1}+n_{2})n_{2}+ \frac{n_{2}^{2}}{(\sin \theta)^{2}}\right\}\\
=&
\frac{
(n_{1}(\sin \theta)^{2}-n_{2}(\cos \theta)^{2})^{2}
((\sin \theta)^{2}-(\cos \theta)^{2})
}{(\cos \theta)^{4}(\sin \theta)^{4}}.
\end{align*}
Hence the solutions of Equation~(\ref{biheqn4}) are  
\begin{align}
(\cos \theta)^{2}=\frac{n_{1}}{n_{1}+n_{2}},\ \frac{1}{2}.
\end{align}
Summing up the above arguments, 
we have the following theorem.
\begin{theorem}\label{minimal-times-minimal2}
For $i=1, 2$, let $n_{i}$ be a positive integer and 
let   
$M_{i}\subset S^{n_{i}}(1/\sqrt{2})$ be a minimal R-space.
Then, $M_{1}\times M_{2}\subset S^{n_{1}+n_{2}+1}(1)$
is a proper biharmonic submanifold of $S^{n_{1}+n_{2}+1}(1)$.
When $\dim M_{1}\neq \dim M_{2}$, $M_{1}\times M_{2}\subset S^{n_{1}+n_{2}+1}(1)$ is proper biharmonic.
\end{theorem}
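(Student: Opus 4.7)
The plan is to realize the product $M_1\times M_2\subset S^{n_1+n_2+1}(1)$ as an orbit of a reducible isotropy representation and then specialize the computation carried out in the paragraphs preceding the theorem. Since each $M_i$ is a minimal R-space in $S^{n_i}(1/\sqrt 2)$, there exist a compact symmetric pair $(G_i,K_i)$ with decomposition $\mathfrak g_i=\mathfrak k_i\oplus \mathfrak m_i$, a maximal abelian subspace $\mathfrak a_i\subset\mathfrak m_i$, and a unit vector $H_i\in \mathfrak a_i$ such that $M_i=\mathrm{Ad}(K_i)(H_i/\sqrt 2)$; writing $d_i=\dim M_i$, the minimality of $M_i$ in $S^{n_i}(1/\sqrt 2)$ becomes, after homothetic rescaling to the unit sphere in $\mathfrak m_i$, the relation $\widetilde{\tau_{H_i}}=-d_iH_i$ established in Section~\ref{Compact symmetric pair and hyperpolar representation}.

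Next I would form the product pair $(G,K)=(G_1\times G_2,K_1\times K_2)$, whose linear isotropy representation on $\mathfrak m=\mathfrak m_1\oplus\mathfrak m_2$ has the reducible root system $\Sigma=\Sigma_1\cup\Sigma_2$ of exactly the type treated in the excerpt. Taking $\theta=\pi/4$ in that construction produces $H=(1/\sqrt 2)(H_1+H_2)\in S$, and the product structure of $K$ yields immediately
\begin{align*}
\mathrm{Ad}(K)H=\mathrm{Ad}(K_1)(H_1/\sqrt 2)\times\mathrm{Ad}(K_2)(H_2/\sqrt 2)=M_1\times M_2
\end{align*}
inside the unit sphere of $\mathfrak m\cong\mathbb R^{n_1+n_2+2}$. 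Biharmonicity is then immediate from the calculation leading to~(\ref{biheqn4}): its solutions $\theta\in(0,\pi/2)$ are exactly $(\cos\theta)^2=d_1/(d_1+d_2)$, which recovers the minimal case, and $(\cos\theta)^2=1/2$, which is our value $\theta=\pi/4$. Hence $M_1\times M_2$ is biharmonic in $S^{n_1+n_2+1}(1)$.

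For properness I would observe that the two distinguished values of $(\cos\theta)^2$ above coincide if and only if $d_1=d_2$, i.e.\ $\dim M_1=\dim M_2$. Under the assumption $\dim M_1\neq\dim M_2$, the orbit at $\theta=\pi/4$ is therefore biharmonic but not minimal, so $M_1\times M_2$ is proper biharmonic. The only genuinely geometric step in the whole argument is the identification $\mathrm{Ad}(K)H=M_1\times M_2$ at $\theta=\pi/4$, and this is essentially bookkeeping: once one chooses the product pair, $\mathrm{Ad}(k_1,k_2)$ acts on $(H_1/\sqrt 2,H_2/\sqrt 2)$ componentwise. After that, the theorem is a direct specialization of the computation of $\widetilde{\tau_{2,H}}$ already performed in the excerpt, and no further analysis of the bitension field or shape operator is required.
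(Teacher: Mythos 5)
Your proposal is correct and follows essentially the same route as the paper: the paper's proof of Theorem~\ref{minimal-times-minimal2} is precisely the specialization of the preceding reducible-root-system computation, where the solutions of Equation~(\ref{biheqn4}) are $(\cos\theta)^{2}=n_{1}/(n_{1}+n_{2})$ (the minimal orbit) and $(\cos\theta)^{2}=1/2$ (the product of the two minimal R-spaces), with properness following exactly as you argue from the uniqueness of the minimal solution. Your explicit construction of the product pair $(G_{1}\times G_{2},K_{1}\times K_{2})$ and the identification $\mathrm{Ad}(K)H=M_{1}\times M_{2}$ at $\theta=\pi/4$ only makes explicit what the paper leaves implicit in ``summing up the above arguments.''
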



\begin{thebibliography}{HTST}
\bibitem[AM]{AM} K. Akutagawa and Sh. Maeta, 
\textit{Properly immersed biharmonic submanifolds in the Euclidean spaces}, 
Geometriae Dedicata, \textbf{164} (2013), 351--355. 

\bibitem[BMO1]{BMO1} A. Blamu\c{s}, S. Montaldo and C. Oniciuc,
\textit{Classification results for biharmonic submanifolds in spheres}, 
Israel J. Math., \textbf{168} (2008), 201--220. 

\bibitem[BMO2]{BMO2} A. Blamu\c{s}, S. Montaldo and C. Oniciuc,
\textit{Classification results and new examples of proper biharmonic submanifolds in spheres}, 
Note Mat., \textbf{1} (2008), 49--61. 

\bibitem[C]{C} B.-Y. Chen,
\textit{Some open problems and conjectures on submanifolds of finite type}, 
Soochow J. Math., \textbf{17} (1991), 169--188. 

\bibitem[EL1]{EL1} J. Eells and L. Lemaire,
\textit{Selected Topics in Harmonic Maps},
CBMS, Regional Conference Series in Math., Amer. Math. Soc., {\bf 50}, 1983.

\bibitem[HTST]{HTST} D. Hirohashi, H. Tasaki, H.J. Song, R. Takagi, 
{\it Minimal orbits of the isotropy groups of symmetric space of compact type}, 
Differential Geom. Appl. {\bf 13} (2000), no.2, 167--177.

\bibitem[IIU]{IIU} T. Ichiyama, J. Inoguchi, H. Urakawa,
\textit{Classifications and isolation phenomena  of biharmonic maps and bi-Yang-Mills fields}, 
Note di Mat., {\bf 30}, (2010), 15--48. 

\bibitem[IS]{IS} J. Inoguchi and T. Sasahara, 
\textit{Biharmonic hyper surfaces in Riemannian symmetric spaces I}, 
Hiroshima Math. J., {\bf 46}, (2016), 97--121.

\bibitem[KO]{KO} Y. Kitagawa and Y. Ohnita, 
{\it On the mean curvature of R-spaces}, 
T\^ohoku Math. J. {\bf 35} (1983) 499--502.

\bibitem[NU1]{NU1} N. Nakauchi and H. Urakawa, 
\textit{Biharmonic hypersurfaces in a Riemannian manifold with non-positive Ricci curvature}, 
Ann. Global Anal. Geom., \textbf{40} (2011), 125--131. 

\bibitem[NU2]{NU2} N. Nakauchi and H. Urakawa, 
\textit{Biharmonic submanifolds in a Riemannian manifold with non-positive curvature},
Results in Math.,{\bf 63} (2013), 467--474. 

\bibitem[NUG]{NUG} N. Nakauchi, H. Urakawa and S. Gudmundsson, 
\textit{Biharmonic maps into a Riemannian manifold of non-positive curvature}, 
Geom. Dedicata, 2013, to appear.


\bibitem[OSU]{OSU} S. Ohno, T. Sakai and H. Urakawa,
\textit{Biharmonic homogeneous hypersurfaces in compact symmetric spaces},
Differential Geom. Appl. {\bf 43} (2015), 155--179.

\bibitem[OSU2]{OSU2} S. Ohno, T. Sakai and H. Urakawa,
\textit{Biharmonic homogeneous submanifolds in compact Lie groups and 
compact symmetric spaces},
arXiv:1612.01063.

\end{thebibliography}
\end{document}